\newlength{\originalbase}
\begin{document}
\title{Cops vs.\ Gambler}

\author{Natasha Komarov\thanks{Department of Mathematical Sciences, Carnegie Mellon University,
Pittsburgh PA 15213, USA; natasha.komarov@gmail.com.}
\, and Peter Winkler\thanks{Department of Mathematics, Dartmouth College,
Hanover NH 03755-3551, USA; peter.winkler@dartmouth.edu. Research supported
by NSF grant DMS-1162172, and in part by a Simons Professorship at the Mathematical
Sciences Research Institute, Berkeley CA.}}

\maketitle

\newtheorem{theorem}{Theorem}[section]
\newtheorem{claim}{Claim}[theorem]
\newtheorem{prop}[theorem]{Proposition}
\newtheorem{remark}[theorem]{Remark}
\newtheorem{lemma}[theorem]{Lemma}
\newtheorem{corollary}[theorem]{Corollary}
\newtheorem{guess}[theorem]{Conjecture}
\newtheorem{conjecture}[theorem]{Conjecture}

\begin{abstract}
We consider a variation of cop vs.\ robber on graph in which the robber is not restricted by the graph edges;
instead, he picks a time-independent probability distribution on $V(G)$ and moves according to this fixed distribution.
The cop moves from vertex to adjacent vertex with the goal of minimizing expected capture time. Players move simultaneously.
We show that when the gambler's distribution is known, the expected capture time (with best play) on any connected $n$-vertex
graph is exactly $n$.  We also give bounds on the (generally greater) expected capture time when the gambler's distribution is unknown to the cop.
\end{abstract}

\section{Introduction}
\label{intro section}
The game of cops and robbers on graphs was introduced independently by Nowakowski and Winkler \cite{NW} and Quilliot \cite{Q},
and has generated a great deal of study in the three decades since; see, e.g., \cite{1,BGHK,3,4}.
In the original formulation a cop and robber move alternately from vertex to adjacent vertex (or stay where
they are) on a connected, undirected graph $G$. The players have full information about each other's
current position at each step. The cop's goal is to minimize capture time, the robber's to maximize it.
There are graphs on which a robber playing optimally can elude the cop forever; for instance, chasing the robber
on the 4-cycle is clearly a hopeless endeavor for the cop. Graphs on which a cop can win are called ``cop-win.''
More precisely, a graph is cop-win if there is a vertex $u$ such that for every vertex $v$, the cop beginning at
$u$ can capture the robber beginning at $v$.  Cop-win graphs---also known as ``dismantlable'' graphs \cite{NW}---have
appeared in statistical physics \cite{BW, congress} as well as combinatorics and game theory.

The capture time in the original version of the game played on a cop-win graph has been analyzed and found to be at most
$n{-}4$ for all graphs with $n \geq 7$ vertices~\cite{BGHK,Gavenciak}. This game contains equitable restrictions on the
movements of the two players: the cop and robber are both constrained by the graph and can both see each other.
What happens to the capture time if the rules are asymmetrical, and/or the game is played ``at night''?
In the ``hunter and rabbit'' game~\cite{ARSSV, kakeya}, the players move without seeing each other, and the robber-turned-rabbit
is not constrained by the graph edges; that is, he is free to move to any vertex of the graph at each step.
It turns out that the rabbit has a strategy that will get him expected capture time $\Omega(n\log n)$ on the
$n$-cycle (or any graph of linear diameter).

Here we consider a pursuit game with the following rules.  The game is played on a graph $G$ (which will be
assumed throughout this work to be connected and undirected) with $V(G) = \{v_1,v_2, \dots, v_n\}$.  The cop is constrained
to the graph as above, moving from vertex to adjacent vertex (or staying put) at each step. The robber, whom we will now call a {\bf gambler},
chooses a probability distribution $p_1, p_2, \dots, p_n$ on $V(G)$ so that at each time $t \geq 0$, he is at vertex
$v_i$ with probability $p_i$. We call this probability distribution his {\bf gamble}. The players move simultaneously
and in the dark and the game continues until the gambler is captured---that is, until the players occupy the same vertex
at the same time.  The {\bf capture time} is the number of moves up to and including the capture, thus a positive integer;
the cop's objective is to minimize expected capture time while the gambler does his best to maximize it, thus we may think
of the expected capture time, with best play by both players, as the {\bf value} of the game (to the gambler).  We consider
two variations: one in which the cop knows the gamble, and one in which she does not. When the gamble is known by the cop,
we have the following rather surprising result: the value of the cop vs.\ gambler game is exactly $n$
regardless of the graph structure or the method of choosing the cop's initial location.

Pursuit games have obvious application in warfare (e.g., destroyer vs.\ submarine) and crime-fighting, but our somewhat
less adversarial cop vs.\ gambler game is perhaps more likely to appear in software design.  Imagine, for example, that
an anti-incursion program has to navigate a linked list of ports, trying to minimize the time to intercept an enemy packet
as it arrives.  If the enemies' port-choice distribution is known we get a version of the known gambler, otherwise the
unknown gambler.

\section{Cop \& gambler on a tree}
We suppose first that the graph $G$ on which the game is played is a tree with vertices $v_1,\dots,v_n$, with the cop beginning
at vertex $v_1$ (which we think of as the root).

\begin{lemma}
\label{tree}
The cop can capture the gambler in expected time at most $n$ on any tree of order $n$.
\end{lemma}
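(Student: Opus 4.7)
The plan is to proceed by induction on $n$. The base case $n=1$ is immediate (the gambler is forced to sit at $v_1$ and is captured at time $1$). For the inductive step I first record the expected capture time of a deterministic cop walk $w_0=v_1,w_1,w_2,\ldots$, with each $w_t$ adjacent or equal to $w_{t-1}$:
\[
\E T \;=\; \sum_{t\ge 0}\prod_{s=1}^{t}(1-p_{w_s}).
\]
The simplest family of strategies on a tree are the \emph{go-and-stay} strategies $\sigma_v$: walk along the unique path from $v_1$ to a target vertex $v$ (length $d=d(v_1,v)$), then stay at $v$ forever. Writing $R_k=\prod_{s=1}^{k}(1-p_{w_s})$ for the prefix products along this path, a short computation gives $\E T(\sigma_v)=\sum_{k=0}^{d-1}R_k+R_d/p_v$.

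To analyse these strategies uniformly I introduce the tree potential $Q(v)=\prod_{u\in\mathrm{path}(v_1,v)\setminus\{v_1\}}(1-p_u)$, which satisfies $Q(v_1)=1$, the recursion $Q(v)=Q(\mathrm{parent}(v))\cdot(1-p_v)$, and $Q(v)=R_d$ at the endpoint of its path. A one-line leaf-removal induction (if $\ell$ is a leaf then $Q(\ell)\le 1$, and $Q$-values on the remaining tree are unchanged) gives the global budget $\sum_{v\in V(T)}Q(v)\le n$. Using the identity $R_d/p_v=R_{d-1}/p_v-R_{d-1}$ one rewrites
\[
\E T(\sigma_v)\;=\;\sum_{k=0}^{d-2}Q(w_k)+\frac{Q(\mathrm{parent}(v))}{p_v}.
\]
My goal is then to show that the minimum over $v$ is at most $n$, ideally via a weighted averaging over vertices (for instance with weight $\alpha_v=p_v$) that converts the $\sum Q\le n$ budget into a uniform bound on $\min_v\E T(\sigma_v)$.

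The main obstacle is the $Q(\mathrm{parent}(v))/p_v$ term, which blows up when $p_v$ is small; the natural $p_v$-weighted average kills the offending $1/p_v$ factor but appears to lose an extra factor of two, yielding only $\E T\le 2n$. If pure go-and-stay does not suffice, my backup plan is a direct leaf-removal induction: pick a leaf $\ell$ with parent $u$, apply the inductive hypothesis to $T-\ell$ under the renormalized distribution $p|_{T-\ell}/(1-p_\ell)$ to get a walk with reduced-game expectation at most $n-1$, and then splice a short detour to $\ell$ into this cop walk, with a mixing parameter tuned to $p_\ell$ and $d(v_1,\ell)$. The crux --- and the step I expect to require the most work --- is showing that this splicing costs at most one additional unit of expected capture time relative to the inductive $n-1$ bound, exactly closing the gap to $n$ and yielding the lemma.
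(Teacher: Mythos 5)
Your framework is sound as far as it goes: the paper's optimal cop strategy is in fact a deterministic ``go-and-stay'' walk of exactly the kind you consider, your formula $\E T(\sigma_v)=\sum_{k=0}^{d-1}R_k+R_d/p_v$ and its rewriting in terms of $Q$ are correct, and so the target $\min_v \E T(\sigma_v)\le n$ is true. But the proposal does not prove it, and both of the routes you sketch have a genuine hole. The primary route fails for the reason you yourself identify: the $p_v$-weighted average of $\E T(\sigma_v)$ against the budget $\sum_v Q(v)\le n$ only yields $2n$. The deeper problem is that this budget is too weak to do better --- it follows from nothing more than $Q(v)\le 1$ termwise, so it carries no information about the tree structure or about which branch is worth entering, and no averaging scheme built on it alone can distinguish the good target from the bad ones. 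The backup route leaves unproven precisely the step that carries all the difficulty (that splicing a detour to a removed leaf costs at most one unit of expectation), and there is a concrete obstruction to it: renormalizing to $p|_{T-\ell}/(1-p_\ell)$ makes every survival factor $1-p_u/(1-p_\ell)$ \emph{smaller} than $1-p_u$, so the inherited walk's expectation in the full game already exceeds the inductive bound $n-1$ before you pay for the detour, and the required cancellation is not established.

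What the paper does instead is choose the walk greedily by a ratio invariant rather than by averaging: it relabels so the cop follows a path $v_1,v_2,\dots$ in which $v_{i+1}$ is the neighbor maximizing $c_{i+1}/m_{i+1}$, the total gamble mass of the branch below it divided by the branch's size, and the cop stops for good at the first $v_i$ with $p_i\ge c_i/m_i$. A backward induction along this path shows the capture time from $v_i$ is at most $m_i/c_i$, which at the root gives $n/1=n$. The quantity driving the argument is this mass-to-size ratio of branches, not a product potential like your $Q$; if you want to salvage your approach, you should look for a selection rule and an invariant of that form rather than a global averaging inequality.
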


\begin{proof}
For any $i$ and $j$, we let $P_{ij}$ be the (unique) path from $v_i$ to $v_j$.  We denote by $B_i$ the branch of $G$
beginning at $v_i$, that is, $B_i := \{v_j:~v_i \in P_{1j}\}$.  Let $m_i := |B_i|$ be the number of
vertices in that branch and $c_i := \sum_{v_j \in B_i}p_j$ the sum of the probabilities assigned to that branch by the gamble.

We will now (re)-number the vertices of $G$ so that the cop's strategy will be to follow the path $v_1, v_2,\dots,v_k$ from the root
toward a leaf, possibly stopping for good at some vertex on the way.  The path is defined inductively as follows: given $v_1,\dots,v_i$,
let $v_{i+1}$ be a neighbor of $v_i$, other than $v_{i-1}$, that maximizes $c_i/m_i$.  (Informally, the cop enters a branch with
vertices of highest average probability.)  If there is no such $u$, i.e., if $v_i$ is a leaf with $i>1$, then $k=i$ and the path-labeling
is finished; the remaining vertices of $G$ are numbered arbitrarily.

Let $T_i$ be the capture time (always assuming best play) from the moment the cop moves to $v_i$; we will prove, by backward induction
on $i$, that if $v_i$ is reached, then $T_i \le m_i/c_i$.  Note that $m_1 {=} n$ and $c_1 {=} 1$, thus the claim is equivalent to the
statement of the lemma for $i=1$.

If the cop reaches the leaf $v_k$, she remains there and since capture is now a matter of waiting for success in a sequence of
i.i.d.\ Bernoulli trials with success probability $p_k$, we have $T_k = 1/p_k = m_k/c_k$, establishing the base of the induction.

Suppose the cop is at $v_i$, $i<k$. If $p_i \ge c_i/m_i$, the cop stays at $v_i$ and captures in expected time $1/p_i \le m_i/c_i$ as claimed.
Otherwise she moves on to $v_{i+1}$ giving
$$
T_i \le 1 + (1 - p_i)T_{i+1} \le 1 + (1 - p_i)\frac{m_{i+1}}{c_{i+1}}
$$
by the induction assumption.  

Since the average probability of vertices in $B_i \setminus \{v_i\}$ is $\frac{c_i - p_i}{m_i - 1}$, and $v_{i+1}$ was chosen to maximize
average probability in $B_{i+1}$, we know that $\frac{c_{i+1}}{m_{i+1}} \ge \frac{c_i - p_i}{m_i - 1}$.  Hence,
$$
T_i \le 1 + (1 - p_i)\frac{m_i - 1}{c_i - p_i} = 1 + (m_i - 1)\frac{1 - p_i}{c_i - p_i}~.
$$
Noting that $\frac{1 - p_i}{c_i - p_i}$ decreases as $p_i$ decreases, and recalling that $p_i < c_i/m_i$, we deduce that
$$
T_i \le 1 + (m_i - 1)\frac{1 - c_i/m_i}{c_i - c_i/m_i} = \frac{m_i}{c_i}
$$
and the proof is complete.
\end{proof}

\section{Cop \& gambler on a general graph}
In this section, we complete our capture time calculations for the cop vs.\ gambler game by showing that the gambler can achieve
expected capture time at least $n$ on any connected, $n$-vertex graph, and consequently that the value of the cop vs.\ gambler game is exactly $n$.

\begin{lemma}
\label{atmostn}
The cop and gambler game played against a uniform gamble has expected capture time exactly $n$.
\end{lemma}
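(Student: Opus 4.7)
The plan is to observe that against a uniform gambler, the cop's position at any given time is, by the rules of the game, statistically independent of the gambler's position at that time, so the per-step capture probability is exactly $1/n$; the capture time then turns out to be an honest geometric random variable with mean $n$.

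First I would fix the gambler's strategy: at each time $t \ge 1$, his position $G_t$ is drawn independently and uniformly from $V(G)$. Since the game is played in the dark, the cop learns nothing during play, so her strategy can be viewed as a (possibly randomized) sequence $c_1, c_2, \dots$ of vertices, produced by private coin flips and independent of the sequence $(G_t)$. This reduction of the cop's strategy to a trajectory independent of the gambler's realized positions is the one point that needs to be argued carefully; I expect it to be the main (and really the only) obstacle, and it is handled just by unpacking the ``in the dark'' hypothesis.

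Once that reduction is in hand, I would condition on any realization of the cop's trajectory. Given the trajectory, the events $\{C_t = G_t\}$ are independent across $t$, because the $G_t$ are i.i.d., and each has probability
$$
\P(G_t = c_t) \;=\; \frac{1}{n}
$$
since $G_t$ is uniform on an $n$-vertex set. Hence, writing $X$ for the capture time,
$$
\P(X > t \mid \text{cop's trajectory}) \;=\; \left(1 - \tfrac{1}{n}\right)^t,
$$
and the same identity holds unconditionally after averaging over the trajectory.

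Finally, $X$ is therefore geometric with parameter $1/n$, so $\E[X] = n$. This holds for every cop strategy, randomized or not, and for every choice of cop starting vertex, which gives ``exactly $n$'' as claimed (the upper bound is trivial from the formula, the lower bound is the same formula applied to optimal cop play).
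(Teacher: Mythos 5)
Your proposal is correct and is essentially the paper's own argument: the paper simply observes that against the uniform gamble the game is a sequence of i.i.d.\ Bernoulli trials with success probability $1/n$ irrespective of the cop's strategy, which is exactly the geometric-capture-time computation you carry out. You merely make explicit the (correct) justification that the cop's trajectory, being generated in the dark, is independent of the gambler's i.i.d.\ positions.
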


\begin{proof}
With $p_i = 1/n$ for every $i$, the game is a sequence of i.i.d.\ Bernoulli trials with success probability $1/n$, hence expected capture time $n$,
irrespective of the cop's strategy.
\end{proof}

\begin{theorem}
\label{general}
The value of the cop vs.\ gambler game on any connected $n$-vertex graph is $n$.
\end{theorem}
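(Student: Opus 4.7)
The plan is to prove the theorem by combining the two lemmas already established, obtaining matching lower and upper bounds.

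For the lower bound, I would observe that the gambler can simply adopt the uniform distribution $p_i = 1/n$. By Lemma \ref{atmostn}, this gamble forces expected capture time exactly $n$ against any cop strategy whatsoever, so the value of the game is at least $n$.

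For the upper bound, the key idea is to reduce the general-graph case to the tree case via a spanning tree. Given the cop's starting vertex $u$, I would let $T$ be any spanning tree of $G$ rooted at $u$, and let the cop play the strategy prescribed by Lemma \ref{tree} on $T$. Since every edge of $T$ is an edge of $G$, this is a legal cop strategy on $G$. The gambler's distribution, being a time-independent probability distribution on $V(G) = V(T)$, is the same object on the tree, so the analysis in Lemma \ref{tree} applies verbatim: for any gamble, the expected capture time is at most $m_1/c_1 = n/1 = n$. Therefore the value is at most $n$ from any starting vertex.

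Combining the two bounds gives value exactly $n$, independent of the graph structure and of how the cop's starting vertex is chosen. The main subtlety to flag, rather than any real obstacle, is to check that the cop's freedom to choose her initial position does not improve her bound below $n$: this is ruled out by the uniform-gamble lower bound, which holds against every cop strategy, including the choice of starting vertex. I would write the proof in just a few lines, since no new calculation is required beyond invoking Lemmas \ref{tree} and \ref{atmostn} and the spanning-tree reduction.
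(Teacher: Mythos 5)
Your proposal is correct and follows essentially the same route as the paper: a spanning tree reduction combined with Lemma~\ref{tree} for the upper bound, and the uniform gamble via Lemma~\ref{atmostn} for the lower bound. The extra remarks about the legality of the tree strategy on $G$ and the irrelevance of the starting vertex are sound and merely make explicit what the paper leaves implicit.
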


\begin{proof}
Let $G$ be any connected graph of size $n$ and let $H$ be a spanning subtree of $G$. By Lemma~\ref{tree}, the cop can capture the gambler
in expected time at most $n$ on $H$, and consequently on $G$. By Lemma~\ref{atmostn}, we know that the expected capture time is also at least $n$.
\end{proof}

We have not said how the cop's initial position is chosen, but we can now deduce that it does not matter.

\begin{lemma}
The value of the cop vs.\ gambler game is $n$ in all three of the following cases:
\begin{enumerate}
	\item[(a)] The cop's initial position is chosen for her ahead of time.
	\item[(b)] The cop chooses her own initial position, but before she knows the gambler's strategy.
	\item[(c)] The cop chooses her initial position after the gambler makes his strategy known.
\end{enumerate}
\end{lemma}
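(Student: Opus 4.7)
The plan is to observe that the upper bound of Lemma~\ref{tree} holds for \emph{any} designated root, so the upper-bound half of Theorem~\ref{general} actually yields the cop a strategy from any prescribed starting vertex, and then to verify that the uniform gamble of Lemma~\ref{atmostn} blocks the cop from doing better than $n$ in every case, regardless of how or when she picks her start.

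For the upper bound, I would first note that the construction in the proof of Lemma~\ref{tree} never used any special property of the chosen root: given any fixed starting vertex $v$ of the cop, take a spanning subtree $H$ of $G$, root $H$ at $v$, apply the path-labelling and backward induction from the lemma, and conclude that the cop can capture in expected time at most $n$ from $v$. This immediately settles case (a), since whatever vertex is picked ahead of time may serve as the root. In case (b), the cop may simply commit to an arbitrary vertex (say $v_1$) before learning the gamble and then use the same strategy, so her guaranteed expected capture time is still at most $n$. Case (c) only gives the cop more information than (b), so the same upper bound applies a fortiori.

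For the matching lower bound, I would have the gambler play the uniform distribution $p_i = 1/n$ in every case. By Lemma~\ref{atmostn}, against the uniform gamble the game reduces to i.i.d.\ Bernoulli$(1/n)$ trials and the expected capture time is exactly $n$, no matter what the cop does and no matter where she starts. In particular, in case (c), even though the cop gets to choose her initial position after seeing the gamble, knowing the distribution is uniform gives her no advantage: every choice of starting vertex and every subsequent strategy yields expected capture time $n$.

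There is no real obstacle here; the only thing to be careful about is making explicit that the argument of Lemma~\ref{tree} does not depend on a particular choice of root, so that its conclusion upgrades from ``some starting vertex'' to ``every starting vertex.'' Once that is noted, combining the two halves gives value exactly $n$ in each of the three scenarios.
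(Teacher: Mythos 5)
Your proposal is correct and follows essentially the same route as the paper: the paper likewise observes that (a) is the hardest case for the upper bound and (c) the hardest for the lower bound, then invokes Lemma~\ref{tree} (whose root may be any prescribed vertex) for the former and the uniform gamble of Lemma~\ref{atmostn} for the latter. Your explicit remark that the argument of Lemma~\ref{tree} is root-independent is a point the paper leaves implicit, but it is the same argument.
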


\begin{proof}
	Note that the first situation is the worst for the cop (as she has no control over her starting point) and the third situation is
the best (as she has as much information as is possible before the start of the game). Therefore it suffices to show that the capture time
is at most $n$ in situation (a) and at least $n$ in situation (c). 
	\begin{enumerate}
	\item[(a)] In this situation, the cop can still get expected capture time $n$ on a tree by Lemma~\ref{tree} and consequently on any graph. 
	\item[(c)] In this situation, the expected capture time is still at least $n$ by Lemma~\ref{atmostn}.
	\end{enumerate}
\end{proof}

\section{Cop \& unknown gambler}
To get an expected capture time of at most $n$ when playing against a gambler with a public gamble, our cop relied on knowing the gambler's
strategy. What if this strategy were not known to the cop? We will call the adversary in this variant of the game the ``unknown gambler''
and will proceed to consider the expected capture time in this case.

Let us note first that an additional restriction on the unknown gambler, namely that he may choose only a delta distribution, is equivalent
to a previously-studied problem \cite{mobile&immobile, thesis} which we call ``cop vs.\ sitter."  Here the gambler simply picks a vertex
and hides there.  Expected capture time on an $n$-vertex tree for this game is again $n$ (the way we count moves in this paper); it is not hard to show
that given the cop's initial position, there is a unique meta-probability distribution (concentrated on leaves) for the gambler's choice of
hiding place that achieves the game value.  If the graph is not a tree, expected capture time is strictly less than $n$, minimized at $n/2 + 1$
on the cycle $C_n$ (or any edge-supergraph of it); the cop can achieve this by circling in a random direction.

Without the delta-restriction, the unknown gambler seems strictly stronger than the known gambler, but much weaker than the ``rabbit'' who can
jump to any vertex at any time, not following a fixed probability distribution.  We will show that intuition is correct in both cases.

\begin{lemma}
\label{ukg<rabbit}
The unknown gambler is strictly weaker than the rabbit on $C_n$.
\end{lemma}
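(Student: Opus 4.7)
The plan is to prove the lemma by sandwiching: exhibit an upper bound of $O(n)$ for the value of the cop vs.\ unknown-gambler game on $C_n$, and then appeal to the $\Omega(n\log n)$ rabbit lower bound already cited in the introduction (from \cite{ARSSV, kakeya}).

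For the upper bound, the cop's strategy will be simply to walk around the cycle in one fixed direction. Denote her position at time $t$ by $\sigma(t)$ and set $q_t := p_{\sigma(t)}$. Because the gambler's position is sampled independently from his fixed distribution at each step, the capture events across time are independent Bernoulli trials with success probabilities $q_t$. In any window $W$ of $n$ consecutive steps the cop visits every vertex exactly once, so $\sum_{t \in W} q_t = \sum_i p_i = 1$. Hence the probability of no capture in such a window is at most
$$
\prod_{t \in W}(1-q_t) \le \exp\!\Bigl(-\sum_{t \in W} q_t\Bigr) = e^{-1}.
$$
Since non-overlapping windows yield independent capture events, $\P(\tau > kn) \le e^{-k}$ for every integer $k \ge 0$, and so
$$
\E[\tau] \le n \sum_{k=0}^{\infty} e^{-k} = \frac{en}{e-1}.
$$

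For the other half, the introduction already records that the rabbit has a strategy on $C_n$ (or on any graph of linear diameter) forcing expected capture time $\Omega(n\log n)$. Since $en/(e-1) < c\,n\log n$ for every constant $c>0$ and all sufficiently large $n$, this $\Theta(n)$ value for the cop vs.\ unknown-gambler game is strictly less than the $\Theta(n \log n)$ value of the cop vs.\ rabbit game on $C_n$, which is the conclusion.

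I expect no real obstacle here; the conceptual point is simply that circling is a universally effective strategy against \emph{any} fixed i.i.d.\ gamble, because one full revolution deterministically accumulates capture probability exactly one, irrespective of the distribution. This is precisely the leverage the cop loses against the rabbit, who can shift his mass in reaction to the cop's predictable trajectory and thereby prevent the circling cop from accumulating capture probability at a constant rate.
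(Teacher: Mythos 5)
Your proof is correct and follows essentially the same route as the paper's: the cop circles $C_n$, each full circuit accumulates total capture probability $\sum_i p_i = 1$ regardless of the gamble, so each circuit succeeds with probability at least $1-e^{-1}$, giving an $O(n)$ upper bound that is then contrasted with the cited $\Omega(n\log n)$ rabbit lower bound. The only differences are cosmetic: the paper randomizes the cop's direction and accounts for the length of the successful circuit to sharpen the constant to about $1.082n$, whereas your geometric-series tail bound gives $en/(e-1)\approx 1.582n$, which is equally sufficient for the lemma.
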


\begin{proof}
We know from \cite{ARSSV,kakeya} that the rabbit can force expected capture time $\Theta(n \log n)$ on $C_n$, and it is easy to see that the cop
can achieve linear expected capture time against the unknown gambler on the same graph.  The cop's strategy (which is in fact optimal---see
\cite{thesis} for a proof) is simply to run around and around the cycle, clockwise or counterclockwise according to a fair coin-flip.  Each
circuit of the cop takes $n$ steps and has capture probability
$$
1 - \prod_{i=1}^n (1-p_i) \ge 1 - \left(1-\frac{1}{n}\right)^n \ge 1 - \frac{1}{e}~.
$$
Thus an average of only $\frac{e}{e-1}$ circuits are needed, and the mean length of the circuit on which capture occurs is at most $n/2 + 1$.
Hence expected capture time is at most $1 + \big( \frac{e}{e-1} - \frac12 \big) \sim 1.082n$.
\end{proof}

It turns out (see \cite{thesis}) that the unknown gambler can achieve this expected capture time (asymptotically), by choosing uniformly at random an
interval of about $\sqrt{n}$ vertices on the circle and making his gamble uniform on that interval.  Rather than reproduce this (fairly difficult)
proof in order to show that the unknown gambler is strictly stronger than the unknown gambler, we consider the easier case of the star $K_{1,n-1}$.
\begin{lemma}
\label{ukg>g}
The unknown gambler is strictly stronger than the known gambler on $K_{1,n-1}$.
\end{lemma}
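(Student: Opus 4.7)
The plan is to show that for any (possibly randomized) cop strategy $\sigma$, there exists a gambler distribution $p$ with $E_p[\tau_\sigma] > n$; combined with Theorem~\ref{general} (the known gambler's value is exactly $n$), this establishes that the unknown gambler's value strictly exceeds $n$ on $K_{1,n-1}$, proving the lemma.

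I would fix an arbitrary cop strategy $\sigma$ and let $\mu_v$ denote the cop's long-run (time-averaged) probability of being at vertex $v$, so $\sum_v \mu_v = 1$. Two cases arise. \emph{Case 1:} some vertex $v^*$ has $\mu_{v^*} < 1/n$. Then the gambler plays $p = \delta_{v^*}$; by the renewal interpretation, the expected first-visit time to $v^*$ is essentially $1/\mu_{v^*} > n$, so $E_p[\tau_\sigma] > n$ directly.

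\emph{Case 2:} $\mu_v = 1/n$ uniformly. Here I would exploit the walk constraint on the star: from any leaf $v_i$ the cop is forced into $\{v_0, v_i\}$ on her next step. Let the gambler play $p$ with $p_{v_0} = p_{v_1} = 1/2$ and zero elsewhere; then
\[
E_p[\tau_\sigma] \;=\; \sum_{t \geq 0} \mathbb{E}\bigl[(1/2)^{B_t}\bigr], \qquad B_t := \bigl|\{s \leq t : X_s \in \{v_0, v_1\}\}\bigr|.
\]
If the cop's visits to $\{v_0, v_1\}$ were i.i.d.\ Bernoulli$(2/n)$, this sum would equal $\sum_t (1 - 1/n)^t = n$. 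On the star, however, the forced transition from $v_1$ into $\{v_0, v_1\}$ creates strict positive correlation between consecutive indicators $\mathbf{1}[X_s \in \{v_0,v_1\}]$, so that $B_t$ strictly dominates (in convex order) the i.i.d.\ Binomial$(t, 2/n)$. By strict convexity of $x \mapsto (1/2)^x$, $\mathbb{E}\bigl[(1/2)^{B_t}\bigr]$ exceeds $(1-1/n)^t$ for at least one $t$, and summing gives $E_p[\tau_\sigma] > n$.

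The main obstacle will be making the positive-correlation/convex-order argument quantitative and uniform over all cop strategies --- including non-Markov and non-stationary ones --- and possibly averaging over a random choice of leaf to handle cops whose strategies single out a specific leaf (so the argument does not degenerate). Once these details are settled, the minimax theorem delivers $V_{\mathrm{unknown}} > n = V_{\mathrm{known}}$, as desired.
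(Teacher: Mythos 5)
Your overall plan (for each cop strategy, exhibit a gamble forcing expected time above $n$) has the right shape, but both cases contain genuine gaps. In Case 1 the inference from $\mu_{v^*}<1/n$ to ``expected first-visit time greater than $n$'' is false: the renewal identity $E[\text{return time}]=1/\mu$ concerns return times of stationary processes, not first hitting times of arbitrary non-stationary cop trajectories. A cop on the star who visits leaf $v^*$ on her very first step and never again has $\mu_{v^*}=0$ yet catches the sitter at $v^*$ in one move; more generally the cop can front-load her visits so that every vertex is reached within $2n$ steps while almost all long-run frequencies vanish (and the Ces\`aro limits $\mu_v$ need not even exist). The fix is to randomize the sitter's location, which is exactly what the paper does. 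In Case 2, the convex-order domination of $B_t$ over $\mathrm{Bin}(t,2/n)$ is asserted rather than proved, and it is the entire content of the case; moreover you condition on the exact equality $\mu_v=1/n$ for all $v$, a knife-edge hypothesis an adversarial cop need never satisfy, and expected capture time is in any event governed by the cop's early behavior rather than her asymptotic frequencies. Finally, even if both cases worked, proving ``for every $\sigma$ there is a $p$ with $E_p[\tau_\sigma]>n$'' yields $\sup_p E_p[\tau_\sigma]>n$ pointwise in $\sigma$ but not $\inf_\sigma\sup_p E_p[\tau_\sigma]>n$; to conclude that the value strictly exceeds $n$ you need a single (possibly mixed) gambler strategy with a uniform gap against all cop strategies.

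The paper's argument is far more elementary and avoids occupancy measures entirely: it plays two concrete gambles off against each other. Against the random sitter (a delta mass at a uniformly random leaf), achieving expected time $n$ forces the cop to visit a fresh leaf essentially every other step; but any such systematic leaf-sweep, played against the gamble that is uniform on the $n-1$ leaves, produces Bernoulli$(\tfrac{1}{n-1})$ trials only on alternate moves and hence expected capture time $2n-2$. No single cop strategy handles both gambles, so the unknown gambler's value strictly exceeds $n$ (and the paper notes the best compromise is about $3n/2$). If you want to salvage your approach, replace Case 1 by the random sitter and replace the convex-order claim in Case 2 by this explicit two-gamble computation.
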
 

\begin{proof}
We know of course that expected capture time for cop vs.\ known gambler on $K_{1,n-1}$, or anything else, is $n$; suppose the cop can achieve
this against the unknown gambler as well.  One of the unknown gambler's strategies is that of the random sitter, in which he selects a leaf $u$ uniformly
at random and chooses the probability distribution concentrated at $u$.  To counter this the cop must search the leaves systematically, that is,
without repetition; for example, by visiting leaves in random order on even moves. Since each new leaf requires two steps, this just gets the expected
capture time $n$ that the players are entitled to.  But such a strategy fails against the gamble that assigns the uniform probability distribution to
leaves, because the result is i.i.d.\ Bernoulli trials with success probability $\frac{1}{n-1}$ every {\em other} turn, giving expected capture time $2n-2$.
(A good compromise on the cop's part is choosing a random permutation of the leaves and then visiting each leaf {\em twice}, getting expected capture
time about $3n/2$ versus both the random sitter and the uniform gambler.)
\end{proof}

From lemmas \ref{ukg<rabbit} and \ref{ukg>g} we immediately have:

\begin{theorem}
The unknown gambler is strictly stronger than the known gambler and strictly weaker than the rabbit.
\end{theorem}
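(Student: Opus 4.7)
The plan is to derive the theorem immediately by combining Lemmas \ref{ukg<rabbit} and \ref{ukg>g}. First I would make explicit two monotonicity observations that hold on every connected graph $G$. Any fixed gamble available to the known gambler is also available to the unknown gambler, but against the latter the cop must play without knowledge of the distribution; since extra information cannot hurt the cop, the unknown-gambler value is at least the known-gambler value on $G$. Likewise, the rabbit may simulate any unknown gambler by simply drawing its position at each step independently from the gambler's fixed distribution, so the rabbit value is at least the unknown-gambler value on $G$. Together these give the weak forms of both comparisons on every graph.

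To promote each inequality to a strict one, I would cite the two lemmas for concrete separators. Lemma \ref{ukg>g} exhibits the star $K_{1,n-1}$ as a graph on which the unknown-gambler value strictly exceeds the known-gambler value of $n$, while Lemma \ref{ukg<rabbit} exhibits the cycle $C_n$ as a graph on which the rabbit value $\Theta(n\log n)$ strictly exceeds the unknown-gambler value $O(n)$. Combining these witnesses with the two monotonicity observations yields the theorem in the form stated.

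There is essentially no obstacle here: both lemmas are already established, and the monotonicity of the game value with respect to the adversary's power is immediate from the definitions. The one point I would be careful to phrase precisely is what ``strictly stronger/weaker'' means, namely that the weak inequality between the two values holds on every connected graph and is strict on at least one such graph; with that convention the proof is a one-line application of the two lemmas.
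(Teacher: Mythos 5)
Your proposal is correct and matches the paper's approach: the paper likewise deduces the theorem immediately from Lemmas \ref{ukg<rabbit} and \ref{ukg>g}, with the monotonicity of the game value in the adversary's power (which you spell out explicitly) left implicit. Your added care about what ``strictly stronger/weaker'' means is a reasonable elaboration, not a different argument.
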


We conclude with a linear upper bound for expected capture time against the unknown gambler, suggested by the star analysis above.

\begin{lemma}
The cop vs.\ unknown gambler game has expected capture time less than $2n$, on any connected $n$-vertex graph.
\end{lemma}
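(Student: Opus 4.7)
The plan is to analyze a randomized DFS strategy. Let $T$ be a spanning tree of $G$, and let $W=(w_0,\ldots,w_{L-1})$ be a closed DFS walk of $T$ of length $L=2(n-1)$, so that $W$ visits every vertex at least once. The cop picks a uniformly random offset $r\in\{0,1,\ldots,L-1\}$ and follows $W$ starting at $w_r$, moving cyclically thereafter.

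For any gamble $p$, write $q_s = p_{w_s}$ and $P = \prod_{s=1}^L (1-q_s)$; since $W$ is closed, $P$ is independent of $r$. Writing $n_v$ for the number of occurrences of $v$ in $W$, a standard renewal-style decomposition into cycles of length $L$ yields
\[
\E[T] \;=\; \frac{\bar S}{1-P}, \qquad \bar S \;:=\; \E[\min(T,L)],
\]
where the expectation is over both the random offset and the gambler's iid moves. Because $P(T=1) = \frac{1}{L}\sum_v n_v p_v \ge 1/L > 0$, one has $\bar S < L$ strictly, hence $\E[T] < L/(1-P)$.

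If $P\le 1/n$, this already gives $\E[T] < L/(1-1/n) = 2n$, and the lemma is proved. The remaining case, $P > 1/n$, is the main difficulty. Here the gamble must place most of its mass on the leaves of $T$: since each non-leaf $v$ contributes a factor $(1-p_v)^{\deg_T(v)}$ with $\deg_T(v)\ge 2$ to $P$, a routine estimate shows $P<1/n$ otherwise. The random-offset symmetry then makes each leaf's first-hit time uniform on $\{1,\ldots,L\}$, which allows a much tighter bound on $\bar S$ than the trivial one. After careful case analysis (in the spirit of the circle computation in Lemma~\ref{ukg<rabbit}), this yields $\bar S < 2n(1-P)$, and hence $\E[T]<2n$.

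The hardest step is the second regime. The naive $\bar S\le L$ combined with $P\le 1/e$ (by AM--GM, using the constraint $\sum_v n_v p_v \ge 1$) gives only $\E[T]\le L\cdot e/(e-1)\approx 3.16n$, which is too weak. Obtaining the sharper bound requires exploiting the uniform distribution of leaf hit times from the random offset, together with the constraint $\sum p_v = 1$ on the leaf support, to show a case-by-case improvement over the trivial $\bar S\le L$ estimate.
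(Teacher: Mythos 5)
There is a genuine gap, and it sits exactly where you flagged "the main difficulty." Your renewal identity $\E[T]=\bar S/(1-P)$ is fine, and Case 1 ($P\le 1/n$) is fine, but Case 1 is a tiny corner of the problem: since $P=\prod_v(1-p_v)^{\deg_T(v)}$, the uniform gamble already gives $P=(1-1/n)^{2(n-1)}\approx e^{-2}\gg 1/n$, and a gamble uniform on the leaves of a star gives $P\approx e^{-1}$. So essentially every gamble of interest lands in Case 2, where you assert but do not prove $\bar S<2n(1-P)$. Worse, the stepping stone you propose for Case 2 is false: you claim that $P>1/n$ forces the mass onto leaves because non-leaves contribute $(1-p_v)^{\deg_T(v)}$ with $\deg_T(v)\ge 2$, but even if \emph{all} mass sits on non-leaves this only yields $P\le e^{-2}$, which exceeds $1/n$ for $n\ge 8$. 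The uniform gamble is a concrete counterexample to "mass must concentrate on leaves when $P>1/n$." So the case split does not isolate a tractable structure, and the claimed "case-by-case improvement" on $\bar S$ is the entire content of the lemma, left unproven.

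The paper's proof avoids this by changing the walk, not the analysis: the cop does a DFS tour of a spanning tree but \emph{pauses one extra turn at each leaf}, so that every vertex (leaf or not) is occupied at least twice per round. A round then has length at most $3n-2$ but failure probability at most $\prod_i(1-p_i)^2\le(1-1/n)^{2n}\le e^{-2}$ uniformly over all gambles --- no case analysis needed. The second ingredient is a fair coin flip choosing whether the tour is run forward or backward, which makes the expected capture position within the successful round at most half the round length; this plays the role your random offset was meant to play. Combining, the expected capture time is at most $3n/(1-e^{-2})-3n/2\sim 1.97n<2n$. If you want to salvage your single-pass walk of length $2(n-1)$, note that on the star against the uniform-on-leaves gamble it achieves only $2n-O(1)$, so any correct proof along your lines must be essentially tight and cannot afford the losses in your sketch; doubling the leaf visits as in the paper is the cleaner route.
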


\begin{proof}
It suffices to prove the result when $G$ is a tree.  Let the cop perform a {\em depth first search} on $G$; that is, a tour of
$G$ in which once a branch is entered, it is not exited until all of its vertices have been visited.  A coin is flipped to determine whether
the cop will perform the search forward or backward; but in either case, each time she enters a leaf, she stays there for an extra turn.
The search is repeated until capture.

Since there are at most $n{-}1$ leaves and each edge is traversed twice, one search takes time at most $3n{-}2$.  During that time every
vertex is visited at least twice, hence the probability of capture in a given round is at least
$$
1 - \prod_{i=1}^n (1-p_i)^2 \ge 1 - \left(1-\frac{1}{n}\right)^{2n} \ge 1 - \frac{1}{e^2}~.
$$

Thus an average of $1/(1-e^{-2}) \sim 1.157$ rounds will suffice for capture, and the average time to capture in the {\em successful}
round is at most $3n/2$.  We conclude that overall the capture time is bounded by $3n/(1-e^{-2}) - 3n/2 \sim 1.97n$.
\end{proof}

Our guess?  The right bound is $3n/2$, the star being the worst case.

\bibliographystyle{amsplain}
\bibliography{copsandrobbersbib}

\end{document}